\tikzstyle{none}=[inner sep=0pt]
\definecolor{hexcolor0xfefdfd}{rgb}{0.996,0.992,0.992}
\theoremstyle{plain}
\newtheorem*{main-theorem}{Theorem}
\newtheorem{theorem}{Theorem}[section]
\newtheorem{prop}[theorem]{Proposition}
\newtheorem{claim}[theorem]{Claim}
\newtheorem{lemma}[theorem]{Lemma}
\theoremstyle{definition}
\newtheorem{definition}[theorem]{Definition}
\newtheorem{example}[theorem]{Example}
\newtheorem{construction}[theorem]{Construction}
\numberwithin{equation}{section}
\def\ra{\rightarrow}
\newcommand\arr{\ifinner\to\else\longrightarrow\fi}
\newcommand\Mg[1]{\overline{\mathcal{M}}_{#1}}
\newcommand\M{\overline{M}}
\def\co{\colon\thinspace} 
\DeclareMathOperator{\Nef}{Nef}
\DeclareMathOperator{\Pic}{Pic}
\DeclareMathOperator{\card}{card}
\def\Hn1{\mathcal{H}_{n,1}}
\renewcommand\S{\mathfrak{S}}
\def\M{\overline{M}}
\def\QQ{\mathbb{Q}}
\def\ZZ{\mathbb{Z}}
\def\ZZ{\mathbb{Z}}
\def\DD{\mathbb{D}}
\def\EE{\mathbb{E}}
\newcommand\modp[2]{\langle #1\rangle_{#2}}
\def\sl{\mathfrak{sl}}
\begin{document}
\title{New nef divisors on $\M_{0,n}$}
\author[M. Fedorchuk]{Maksym Fedorchuk}
\address{Department of Mathematics\\
Boston College \\
Carney Hall 324\\
140 Commonwealth Avenue\\
Chestnut Hill, MA 02467}
\email{maksym.fedorchuk@bc.edu}

\begin{abstract}
We give a direct proof, valid in arbitrary characteristic, 
of nefness for two families of F-nef divisors on $\M_{0,n}$.
The divisors we consider include all type A level one conformal block divisors
as well as divisors previously not known to be nef. 
\end{abstract}
\date{\today}
\maketitle

\setcounter{section}{0}
\setcounter{tocdepth}{1}
\tableofcontents

\section{Introduction}
\label{S:introduction}

We prove nefness for two families of divisors on $\M_{0,n}$ by a new method.
The first family $\mathcal{D}_1$ consists of all type A level one conformal block divisors 
and has many geometric incarnations; see Section \ref{S:D1}.
We give a new proof that every divisor in $\mathcal{D}_1$ defines a base-point-free linear system on $\M_{0,n}$,
which is an isomorphism on $M_{0,n}$. 

The conformal block divisors on $\M_{0,n}$ form an important family: 
They are nef by work of Fakhruddin \cite{fakh}, 
often span extremal rays of the (symmetric) nef cone \cite{agss,ags}, and are related
to Veronese quotients \cite{gian,gian-gib,GJM,GJMS}. 
In particular, CB divisors account for all known regular morphisms
on $\M_{0,n}$ \cite[Section 19.5]{conf-block}. 

As an application of our method, we construct a family $\mathcal{D}_2$
of nef divisors on $\M_{0,n}$, 
which do not lie in the cone spanned by the conformal block divisors.\footnote{We do not
know a way to prove that a given divisor is not an effective combination of CB divisors (but see \cite{swinarski} for some results
in this direction). Rather, 
there are explicit examples of divisors in $\mathcal{D}_2$ for which extensive numerical experimentation has failed to uncover CB 
divisors whose span could contain them.} The geometric meaning of this family is elusive, but we speculate that it is connected
to the morphisms defined by divisors in $\mathcal{D}_1$; see Section \ref{S:D2}.

Our proof is elementary in that it relies only on  Keel's relations in $\Pic(\M_{0,n})$ and nothing else. One
advantage of this approach is that it works in positive characteristic, where semiampleness of the conformal block
divisors on $\M_{0,n}$ is not generally known. 

The key observations that enable our proof are:
\begin{enumerate}
\item The family $\mathcal{D}_i$ is functorial with respect to the boundary stratification, 
or, equivalently, satisfies factorization in the sense of \cite{BG}. 
\item Every divisor $D\in \mathcal{D}_i$ is linearly equivalent to an effective combination of the boundary divisors on $\M_{0,n}$.
\end{enumerate}
A standard argument implies that all divisors in $\mathcal{D}_i$ are nef. 
(To prove semiampleness of divisors in $\mathcal{D}_1$, we show that every $D\in \mathcal{D}_1$ is linearly equivalent
to an effective combination of boundary divisors in such a way as to avoid any given point of $\M_{0,n}$.)

The above argument is at the heart of the original inductive approach to the F-conjecture for $\M_{0,n}$; see \cite[Question (0.13)]{GKM}
and the discussion surrounding it.
F-nef divisors obviously satisfy factorization and the strong F-conjecture says
that every F-nef divisor can be written as an effective combination of boundary divisors. 
However, a recent result of Pixton shows that the strong F-conjecture is false:
there exists a nef divisor on $\M_{0,12}$ which is not an effective combination of the boundary divisors \cite{pixton}.

Nevertheless, one could still hope that the strong F-conjecture holds for a restricted class of F-nef divisors;
restrictions of symmetric F-nef divisors to the boundary being one example. The divisors in $\mathcal{D}_i$ are of this form.
Thus the present paper can be regarded as further evidence for the symmetric F-conjecture.

It would be interesting to know whether all conformal block divisors on $\M_{0,n}$ are effective combinations of the boundary divisors
(see the discussion after Theorem \ref{T:T1} below).

\subsection{Notation}

The $i^{th}$ cotangent line bundle and its divisor class on $\M_{0,n}$ is denoted $\psi_i$. We
use the notation $[n]=\{1,\dots, n\}$
and say that a partition $I \sqcup J=[n]$ is \emph{proper} if $\vert I\vert, \vert J \vert \geq 2$.
The boundary divisor on $\M_{0,n}$ corresponding to a proper partition $I\sqcup J=[n]$ is denoted $\Delta_{I,J}$.
We write $\Delta=\sum_{I,J} \Delta_{I,J}$ for the total boundary divisor; here and elsewhere the summation 
is taken over all proper partitions of $[n]$, unless specified otherwise.

Given a set $S$, we denote by $\Gamma(S)$ the complete graph on $S$ and by $E(S)$ the set of all edges
of $\Gamma(S)$. A \emph{weighting} or a \emph{weight function} on $\Gamma(S)$ is a function $w\co E(S) \ra \QQ$. 
For the complete graph $\Gamma([n])$ on the set $[n]$, we write $(i-j)$ to denote the edge joining 
vertices $i$ and $j$.
Given a weight function $w$ on $\Gamma([n])$,
we make the following definitions:
\begin{enumerate}
\item The \emph{$w$-flow through a vertex} $k\in [n]$ is defined to be 
\[
w(k):=\sum_{i\neq k}(w(k-i)).
\]
\item The \emph{$w$-flow across a partition} 
$I\sqcup J=[n]$ is defined to be 
\[
w(I\mid J)=\sum_{i\in I, j\in J} w(i-j).
\] 
\end{enumerate}
A \emph{degree function} on $\Gamma(S)$ is a function $S \ra \ZZ$. Given a degree function $i\mapsto d_i$ on $\Gamma([n])$,
we set $d(I):=\sum_{i\in I} d_i$ for any $I\subset [n]$. Given an integer $m\geq 2$, we say that $I\subset [n]$ is \emph{$m$-divisible}
if $m\mid d(I)$. We call $I\sqcup J=[n]$ an \emph{$m$-partition} if $m\mid d(I)$ and $m\mid d(J)$. 

We denote by $\modp{a}{m}$ the representative in $\{0,1,\dots, m-1\}$ of $a$ modulo $m$.

We work over an algebraically closed field of arbitrary characteristic. 

\section{Weighted graphs and effective combinations of boundary}

Every divisor on $\M_{0,n}$ can be written as 
\begin{align*}
\sum_{i=1}^n a_i \psi_i - \sum_{I,J} b_{I,J} \Delta_{I,J}.
\end{align*}
This representation is far from unique because we have 
the following relation in $\Pic\left(\M_{0,n}\right)$ for every $i\neq j$:
\begin{equation}\label{E:relation}
\psi_i+\psi_j=\sum_{i\in I,j\in J}\Delta_{I,J}.
\end{equation}
Relations \eqref{E:relation} generate the module of all relations among $\{\psi_i\}_{i=1}^{n}$ and
$\{\Delta_{I,J}\}$; this follows,
for example, from \cite[Theorem 2.2(d)]{AC}, which in turn follows from Keel's relations \cite{keel}.
(We note that the above representation is unique if we impose an additional condition $\vert I\vert, \vert J \vert \geq 3$;
see \cite[Lemma 2]{FG}.)

We now state a simple observation that we will use repeatedly in the sequel.
\begin{lemma}[Effectivity criterion]\label{L:main} Let $R=\ZZ$ or $R=\QQ$.
A divisor $D=\sum_{i=1}^n a_i \psi_i - \sum_{I,J} b_{I,J} \Delta_{I,J}$ 
is $R$-linearly equivalent to $\sum_{I,J} c_{I,J}\Delta_{I,J}$ if and only if there is an $R$-valued weighting of 
$\Gamma([n])$ such that the flow through each vertex $i$ is $a_i$ and the
flow across each proper partition $I\sqcup J=[n]$ is $b_{I,J}+c_{I,J}$.

In particular, $D$ is an effective $R$-linear combination of the boundary divisors
on $\M_{0,n}$ if and only if there exists an $R$-valued weighting of $\Gamma([n])$ such
that the flow through each vertex $i$ is $a_i$ and the
flow across each proper partition $I\sqcup J=[n]$ is at least $b_{I,J}$.
\end{lemma}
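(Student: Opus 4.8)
The plan is to translate the statement about linear equivalence into a linear-algebra statement about the relations module, and then to pass to effectivity by an elementary inequality argument. First I would establish the ``if and only if'' for the equivalence $D \sim_R \sum_{I,J} c_{I,J}\Delta_{I,J}$. Given an $R$-valued weighting $w$ of $\Gamma([n])$, consider the divisor $\sum_{(i-j)} w(i-j)\bigl(\psi_i+\psi_j - \sum_{i\in I, j\in J}\Delta_{I,J}\bigr)$, which is $R$-linearly equivalent to zero by relation \eqref{E:relation}. The coefficient of $\psi_k$ in this expression is exactly $\sum_{i\neq k} w(k-i) = w(k)$, the $w$-flow through $k$; and the coefficient of $-\Delta_{I,J}$ is $\sum_{i\in I, j\in J} w(i-j) = w(I\mid J)$, the $w$-flow across the partition. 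Hence $D$ differs from $\sum_i (a_i - w(i))\psi_i - \sum_{I,J}(b_{I,J} - w(I\mid J))\Delta_{I,J}$ by something $R$-linearly equivalent to zero. So if $w$ is chosen with $w(i) = a_i$ for all $i$ and $w(I\mid J) = b_{I,J}+c_{I,J}$ for all proper partitions, then $D \sim_R \sum_{I,J} c_{I,J}\Delta_{I,J}$, giving one direction.

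For the converse, suppose $D \sim_R \sum_{I,J} c_{I,J}\Delta_{I,J}$. Then $\sum_i a_i\psi_i - \sum_{I,J}(b_{I,J}+c_{I,J})\Delta_{I,J} \sim_R 0$, so it lies in the module of relations among the $\psi_i$ and $\Delta_{I,J}$. By the cited fact (\cite[Theorem 2.2(d)]{AC}, from Keel's relations \cite{keel}), this module is generated over $R$ by the relations \eqref{E:relation}; thus there exist coefficients $w(i-j)\in R$, indexed by the edges $(i-j)$ of $\Gamma([n])$, with $\sum_i a_i\psi_i - \sum_{I,J}(b_{I,J}+c_{I,J})\Delta_{I,J} = \sum_{(i-j)} w(i-j)\bigl(\psi_i+\psi_j-\sum_{i\in I,j\in J}\Delta_{I,J}\bigr)$. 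Reading off the $\psi_k$-coefficients gives $a_k = w(k)$, and reading off the $\Delta_{I,J}$-coefficients gives $b_{I,J}+c_{I,J} = w(I\mid J)$, so $w$ is the required weighting. The one subtlety here — the main thing to be careful about — is whether these $\psi$'s and $\Delta$'s are linearly independent enough that matching coefficients is legitimate; since we only need the existence of \emph{some} $w$, it suffices to know the relations module is generated by \eqref{E:relation}, which is exactly what the cited result provides, so no independence is actually needed.

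Finally, for the ``in particular'' clause about effectivity: $D$ is an effective $R$-linear combination of boundary divisors precisely when $D \sim_R \sum_{I,J} c_{I,J}\Delta_{I,J}$ for some $c_{I,J}\geq 0$. By the first part, such a $c$ exists iff there is an $R$-valued weighting $w$ with $w(i)=a_i$ for all $i$ and, for every proper partition, $w(I\mid J) = b_{I,J}+c_{I,J}$ for some $c_{I,J}\geq 0$; the latter is equivalent to requiring $w(I\mid J)\geq b_{I,J}$, with $c_{I,J} := w(I\mid J) - b_{I,J}$ recovering the boundary coefficients. This is a purely formal rephrasing and poses no difficulty. I expect the only genuine content — and hence the main point to get right — to be the invocation of \cite[Theorem 2.2(d)]{AC} to guarantee that \eqref{E:relation} generates all relations, so that the weighting $w$ can always be extracted; everything else is bookkeeping with the definitions of $w$-flow through a vertex and $w$-flow across a partition.
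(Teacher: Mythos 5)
Your proof is correct and takes essentially the same route as the paper's: both directions amount to applying relation \eqref{E:relation} with multiplicity $w(i-j)$ for each edge and comparing coefficients in the free module on the $\psi_i$ and $\Delta_{I,J}$, with the converse resting on the fact (stated just before the lemma) that these relations generate the whole relations module. The paper's proof is simply a more compressed version of the same computation, leaving the converse and the effectivity rephrasing implicit.
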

\begin{proof}

Suppose that for each $i\neq j$ we use the relation \eqref{E:relation} $w(i-j)$ times to rewrite $D$ as 
$\sum_{I,J} c_{I,J} \Delta_{I,J}$.
Then in the free $R$-module generated by $\{\psi_i\}_{i=1}^n$ and $\{\Delta_{I,J}\}$ we have
\begin{multline*}
\sum_{I,J} c_{I,J} \Delta_{I,J}=D-\sum_{i\neq j} w(i-j)\left(\psi_i+\psi_j-\sum_{i\in I,j\in J}\Delta_{I,J}\right)
\\=\sum_{i=1}^n \bigl(a_i-w(i)\bigr) \psi_i -\sum_{I,J} \bigl(b_{I,J}-w(I\mid J)\bigr) \Delta_{I,J}.
\end{multline*}
The claim follows. 
\end{proof}

\section{Type A level one conformal block divisors revisited}
\label{S:D1}

In this section, we study the family $\mathcal{D}_1$ of type A level one conformal block divisors.

\begin{definition}\label{D:D}
Consider $n$ integers $(d_1,\dots, d_n)$ and
let $m\geq 2$ be an integer dividing $\sum_{i=1}^n d_i$.
We define a divisor on $\M_{0,n}$ by the following formula
\begin{equation}\label{E:D}
D\bigl((d_1,\dots, d_n), m\bigr)
=\sum_{i=1}^n \modp{d_i}{m}\modp{m-d_i}{m}\psi_i
-\sum_{I,J} \modp{d(I)}{m}\modp{d(J)}{m}\Delta_{I,J},
\end{equation}
where $d(I):=\sum_{i\in I} d_i$.
\end{definition}

The motivation for this definition comes from the following
observation \cite[Proposition 4.8]{fedorchuk-cyclic}:
\begin{prop}\label{P:new-formula}
For a weight vector $\vec{d}=(d_1,\dots, d_n)$, let $m$ be an integer dividing $\sum_{i=1}^n d_i$.
Let $\EE$ be the pullback to $\M_{0,n}$ of the Hodge bundle over $\Mg{g}$ via the weighted cyclic $m$-covering morphism $f_{\vec{d},m}$ 
and let $\EE_j$ be the eigenbundle of $\EE$ associated to the character $j$ of $\mu_m$. 
Then
\[
\det \EE_j =\frac{1}{2m^2}\left[\sum_{i=1}^n \modp{jd_i}{m}\modp{m-jd_i}{m}\psi_i
-\sum_{I,J} \modp{jd(I)}{m}\modp{jd(J)}{m}\Delta_{I,J}\right].
\]
\end{prop}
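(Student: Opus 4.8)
The plan is to identify $\det\EE_j$ with the first Chern class of a derived pushforward and to evaluate it by Grothendieck--Riemann--Roch on the universal curve, the real content being the behaviour of the cyclic cover over the boundary of $\M_{0,n}$. First I would fix the geometry: let $\rho\colon\overline{\mathcal C}_{0,n}\to\M_{0,n}$ be the universal curve with its disjoint sections $\sigma_1,\dots,\sigma_n$, and let $\phi\colon\mathcal X\to\overline{\mathcal C}_{0,n}$ be the universal weighted cyclic $m$-cover underlying $f_{\vec d,m}$, so that for $\pi=\rho\circ\phi$ one has $\EE=\pi_*\omega_{\mathcal X/\M_{0,n}}$ with its $\mu_m$-action. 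Over $M_{0,n}$ the fibre of $\phi$ is $y^m=\prod_i(x-p_i)^{d_i}$, and over the boundary one takes its admissible (stable) model, which is still a $\mu_m$-curve. Since $\phi$ is finite, flat and $\mu_m$-Galois, the classical description of cyclic covers, combined with relative duality for $\phi$ and the projection formula, identifies the $j$-th Hodge eigenbundle as $\EE_j=\rho_*\mathcal F_j$ where $\mathcal F_j=\omega_\rho\otimes\mathcal M_j$ for an explicit eigensheaf $\mathcal M_j$; on a root stack (or rationally, which is all a Chern-class identity needs) this is a line bundle with
\[
c_1(\mathcal F_j)=c_1(\omega_\rho)+\tfrac1m\sum_{i=1}^{n}\modp{jd_i}{m}\,\sigma_i+(\text{a correction supported over the boundary}).
\]

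Next I would compute $\det\EE_j=c_1(R\rho_*\mathcal F_j)$ by Grothendieck--Riemann--Roch in Mumford's form for families of nodal curves. On the open part $M_{0,n}$, where $\rho$ is smooth and the correction term is absent, GRR gives $\det\EE_j=\rho_*\!\bigl(\tfrac12 c_1(\mathcal F_j)^2-\tfrac12 c_1(\mathcal F_j)\,c_1(\omega_\rho)+\tfrac1{12}c_1(\omega_\rho)^2\bigr)$. Writing $c_1(\mathcal F_j)=c_1(\omega_\rho)+\sum_i e_i\sigma_i$ with $e_i=\modp{jd_i}{m}/m$, and using the standard pushforwards on the universal curve ($\rho_*(\sigma_i^2)=-\psi_i$, $\rho_*(\sigma_i\sigma_{i'})=0$ for $i\neq i'$, $\rho_*(\omega_\rho\,\sigma_i)=\psi_i$), the part of the integrand built from the sections collapses to $\tfrac12\sum_i e_i(1-e_i)\psi_i$, and $\tfrac12 e_i(1-e_i)=\tfrac1{2m^2}\modp{jd_i}{m}\bigl(m-\modp{jd_i}{m}\bigr)=\tfrac1{2m^2}\modp{jd_i}{m}\modp{m-jd_i}{m}$. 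This reproduces the $\psi$-part of the asserted formula; equivalently, it is the familiar local ``branch point of weight $\modp{jd_i}{m}$'' contribution from Chevalley--Weil-type computations for cyclic covers.

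The remaining, and only substantial, task is the boundary. Over a general point of $\Delta_{I,J}$ the fibre of $\mathcal X$ acquires a node at which the local monodromy of the $\mu_m$-action is $d(I)\equiv-d(J)\pmod m$; passing to the admissible model alters $\omega_{\mathcal X/\M_{0,n}}$ near that node, which feeds both into the boundary correction in $c_1(\mathcal F_j)$ and into the node-correction terms in Mumford's GRR. These should assemble into a ``$\psi$-at-the-node'' contribution exactly parallel to the branch-point contributions but of the opposite sign, with fractional weight $\modp{jd(I)}{m}/m$, producing $-\tfrac1{2m^2}\sum_{I,J}\modp{jd(I)}{m}\modp{jd(J)}{m}\Delta_{I,J}$; any leftover $\vec d$-independent boundary classes (coming from $\tfrac1{12}c_1(\omega_\rho)^2$ and the generic node corrections) are then disposed of using Keel's relations \eqref{E:relation} and the vanishing $\lambda=c_1(\rho_*\omega_\rho)=0$ on $\M_{0,n}$. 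I expect the main obstacle to be precisely this boundary bookkeeping --- pinning down how the dualizing sheaf of the admissible cyclic cover differs from the naive one at each node and checking that the various pushforwards combine into the single clean coefficient $\modp{jd(I)}{m}\modp{jd(J)}{m}$. A way to bypass most of it is an induction on $n$: both sides restrict compatibly along the boundary gluing maps $\M_{0,\,I\cup\star}\times\M_{0,\,J\cup\star}\to\M_{0,n}$ --- the left because cyclic covers factorize, the right by additivity of $d(\cdot)$ and of the $\modp{\ }{m}$-combinatorics --- so it would suffice to settle $n=4$, where $\M_{0,4}\cong\PP^1$ and $\det\EE_j$ is a single integer computed directly from the resulting pencil of cyclic covers.
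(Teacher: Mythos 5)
The paper does not actually prove Proposition~\ref{P:new-formula}: it is imported verbatim from \cite[Proposition 4.8]{fedorchuk-cyclic}, so there is no internal argument to compare against. Judged on its own terms, your strategy --- realize $\EE_j$ as $\rho_*\mathcal F_j$ for an explicit eigensheaf on the universal curve and apply Grothendieck--Riemann--Roch --- is the natural one, and your interior computation is correct: with $c_1(\mathcal F_j)=c_1(\omega_\rho)+\sum_i e_i\sigma_i$ and $e_i=\modp{jd_i}{m}/m$, the pushforward does collapse to $\tfrac12\sum_i e_i(1-e_i)\psi_i$, which is exactly the claimed $\psi$-part.

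The genuine gap is where you say it is: the boundary. As written, the coefficient $-\tfrac1{2m^2}\modp{jd(I)}{m}\modp{jd(J)}{m}$ of $\Delta_{I,J}$ is asserted (``these should assemble into\dots''), not derived, and this is the only part of the statement with real content beyond the classical Chevalley--Weil count. Your proposed bypass --- check both sides on F-curves, using that $N_1(\M_{0,n})$ is generated by F-curve classes and that numerical and linear equivalence coincide on $\M_{0,n}$ --- is the right way to close it, but it still requires two verifications you have not supplied. First, the factorization of $\EE_j$ along $\Delta_{I,J}\simeq \M_{0,I\cup\star}\times\M_{0,J\cup\star}$ is not purely formal: the admissible cover over a point of $\Delta_{I,J}$ has $\gcd(d(I),m)$ nodes over the node of the base, and when this gcd exceeds $1$ the dual graph of the cover contributes a graph-cohomology summand to $\EE$ that is distributed among the eigenspaces; one must check that this summand is a trivial subbundle over $\Delta_{I,J}$ (it is, being the weight-zero part of the limit Hodge structure over a locus of constant topological type) so that it does not affect $\det\EE_j$ --- note this is exactly the case where the claimed coefficient vanishes, so the check is not vacuous. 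Second, the base case $n=4$ is an actual computation: the degree of $\det\EE_j$ on $\M_{0,4}\cong\PP^1$ for a pencil of cyclic covers, which must be matched against $\tfrac1{2m^2}\bigl[\sum_{i=1}^4\modp{jd_i}{m}\modp{m-jd_i}{m}-\sum_{I}\modp{jd(I)}{m}\modp{jd(J)}{m}\bigr]$ with the sum over the three $2$--$2$ partitions. Neither step is hard, but until both are done the argument is a plan rather than a proof; with them, it is a complete and essentially self-contained derivation of the cited formula.
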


The divisor $D\bigl((d_1,\dots, d_n),m\bigr)$ has at least three incarnations:
\begin{enumerate}
\item It is a type A level $1$ conformal block divisor; see \cite{fakh} for the definition and \cite{agss} for a detailed study of these divisors. 
More precisely, the $\sl_m$ level $1$ conformal block divisor 
$\DD(\sl_m, 1, (d_1,\dots,d_n))$ equals to $2m^2 D\bigl((d_1,\dots, d_n),m\bigr)$ \cite{fedorchuk-cyclic}. 
\item It is a pullback of a natural polarization on a GIT quotient
of a parameter space of $n$-pointed rational normal curves
\cite{gian,gian-gib}. 
\item It is a determinant of a Hodge eigenbundle as in Proposition \ref{P:new-formula}.
\end{enumerate}
Each interpretation of $D\bigl((d_1,\dots,d_n),m\bigr)$ leads to an independent proof of its nefness.
The first via the theory of conformal blocks which 
realizes $D\bigl((d_1,\dots, d_n),m\bigr)$ as a quotient of a trivial 
vector bundle over $\M_{0,n}$; the second via GIT, and the third via the semipositivity of the Hodge
bundle over $\Mg{g}$, which in turn comes from the Hodge theory \cite{kollar-projectivity}.

We now propose a fourth proof of nefness of $D\bigl((d_1,\dots,d_n),m\bigr)$ which is independent of all of the above
and is completely elementary.

Since the definition of $D((d_1,\dots, d_n),m)$ depends only on $\modp{d_i}{m}$, 
replacing each $d_i$ by $\modp{d_i}{m}$ we can 
assume that $d_i\leq m-1$. 
We proceed with a construction of a certain weighting on the complete graph $\Gamma([n])$.

\begin{prop}[Standard construction]\label{P:standard}
Suppose $1\leq d_i\leq m-1$ and $m \mid \sum_{i=1}^n d_i$.
There exists a weighting $w$ of $\Gamma([n])$ such that
\begin{enumerate}
\item For every vertex $i\in \Gamma([n])$, we have $w(i)=d_i(m-d_i)$.
\item For every proper partition $I\sqcup J=[n]$, we have $w(I\mid J)\geq \modp{d(I)}{m}\modp{d(J)}{m}$. 
\end{enumerate}
In addition, 
\begin{enumerate}
\item[(3)] Given a curve $[C] \in \M_{0,n}$, we can choose $w$ so that 
$w(I\mid J)=\modp{d(I)}{m}\modp{d(J)}{m}$ for any $I\sqcup J=[n]$ satisfying $[C]\in \Delta_{I,J}$.
\item[(4)] For any fixed proper partition $I\sqcup J=[n]$, the weighting $w$ can be chosen so that $w(I\mid J)\geq 2m+\modp{d(I)}{m}\modp{d(J)}{m}$. 
\end{enumerate}
\end{prop}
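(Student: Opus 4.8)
The plan is to construct $w$ by induction on $n$, in a way that mirrors the factorization of $\mathcal D_1$ along the boundary. For the inductive step one picks a proper partition $I\sqcup J=[n]$, sets $d_\star:=\modp{d(J)}{m}$ and $d_\bullet:=\modp{d(I)}{m}$, and applies the inductive hypothesis to the two smaller weight data $\bigl((d_i)_{i\in I},d_\star\bigr)$ and $\bigl((d_j)_{j\in J},d_\bullet\bigr)$, obtaining weightings $w_1$ on $\Gamma(I\cup\{\star\})$ and $w_2$ on $\Gamma(J\cup\{\bullet\})$. The vertex condition for $w_1$ at $\star$ reads $\sum_{i\in I}w_1(i-\star)=d_\star(m-d_\star)=\modp{d(I)}{m}\modp{d(J)}{m}$, and $\sum_{j\in J}w_2(j-\bullet)$ equals the same number; so the two stub vectors $\bigl(w_1(i-\star)\bigr)_i$ and $\bigl(w_2(j-\bullet)\bigr)_j$ are nonnegative with a common total, and one replaces them by any nonnegative matrix $(t_{ij})_{i\in I,\,j\in J}$ with those marginals (a transportation problem, always solvable), declaring $w|_{\Gamma(I)}=w_1$, $w|_{\Gamma(J)}=w_2$, $w(i-j)=t_{ij}$. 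The recursion bottoms out at $n=3$, where $\Gamma(\{1,2,3\})$ is a triangle, the three vertex equations have the unique solution $w(i-j)=\min\bigl(d_id_j,(m-d_i)(m-d_j)\bigr)\ge 0$, and the one-line identity $d_i(m-d_i)+d_j(m-d_j)-\modp{d_i+d_j}{m}\modp{m-d_i-d_j}{m}=2\min\bigl(d_id_j,(m-d_i)(m-d_j)\bigr)$ shows (2) holds with equality. (A node with $\modp{d(I)}{m}=0$ gives $d_\star=d_\bullet=0$; with the convention that a weight-$0$ vertex carries only zero edges this instance reduces to one on fewer vertices, so it is not a genuine exception.) Property (1) is then immediate: the flow of $w$ through $i\in I$ is $\sum_{i'\in I}w_1(i-i')+\sum_{j\in J}t_{ij}=w_1(i)=d_i(m-d_i)$, and symmetrically on $J$.

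I would prove (2) and (3) together by the same induction. For (3): given $[C]$, choose the partition $I\sqcup J$ used at each step to come from an edge of the dual tree of $[C]$ whenever one is available (refining to a trivalent tree otherwise); then $w(I\mid J)=\sum_{i,j}t_{ij}=\modp{d(I)}{m}\modp{d(J)}{m}$ exactly, and a short bookkeeping — a partition of $[n]$ cut out by an edge of the dual tree either sits inside $\Gamma(I)$, or inside $\Gamma(J)$ after relabelling the node, or is $I\sqcup J$ itself, and in the first two cases the crossing weight is untouched because only stub/transport weights of equal total are rearranged — shows every boundary divisor through $[C]$ receives equality. For (2) with an arbitrary proper partition $A\sqcup B$: if $A$ and $B$ do not separate the pair of vertices merged in the last step, then $w(A\mid B)$ is the crossing weight of the induced partition in the smaller graph, which has the same threshold, and induction applies; otherwise one feeds the inductive internal-weight bounds into the identity $w(A\mid B)=\sum_{i\in A}d_i(m-d_i)-2w_{\mathrm{in}}(A)$. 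The main obstacle is precisely this last point: one must pin down the matrices $(t_{ij})$ — I would try the proportional transportation plan — sharply enough that the resulting $w$ satisfies $w_{\mathrm{in}}(S)\le\frac12\bigl(\sum_{i\in S}d_i(m-d_i)-\modp{d(S)}{m}\modp{m-d(S)}{m}\bigr)$ for every subset $S$ simultaneously. Everything else in (2)–(3) is bookkeeping, but this uniform quantitative estimate, proved by induction on $n$, is the heart of the matter.

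Finally, (4) should come from a local modification of a weighting $w$ from (1)–(3). When $|I|,|J|\ge3$ one first uses the slack above to arrange that some internal edge $(i_1-i_2)$ of $I$ and some internal edge $(j_1-j_2)$ of $J$ each carry weight $\ge m$, and then performs the square move that adds $m$ to $w(i_1-j_1),w(i_2-j_2)$ and subtracts $m$ from $w(i_1-i_2),w(j_1-j_2)$: this preserves all vertex flows, raises $w(I\mid J)$ by $2m$, and alters $w(A\mid B)$ for a general $A\sqcup B$ only by an explicit multiple of $m$, which one keeps above threshold by choosing $i_1,i_2,j_1,j_2$ away from the low-slack partitions. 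The cases $|I|=2$ or $|J|=2$ are genuinely different and I would treat them by hand: with $I=\{i_1,i_2\}$ one has $w(I\mid J)=d_{i_1}(m-d_{i_1})+d_{i_2}(m-d_{i_2})-2w(i_1-i_2)$, and since $\sum_{i\in I}d_i(m-d_i)-\modp{d(I)}{m}\modp{m-d(I)}{m}=2\min(d_{i_1}d_{i_2},(m-d_{i_1})(m-d_{i_2}))$ the bound (4) forces $w(i_1-i_2)$ to the value $\min(d_{i_1}d_{i_2},(m-d_{i_1})(m-d_{i_2}))-m$, which may be negative — legitimate by Lemma \ref{L:main} — whereupon one completes $w$ on $\Gamma(J\cup\{\star\})$ by the construction above and re-checks (2) on the partitions separating $i_1$ from $i_2$.
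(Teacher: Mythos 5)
Your construction is genuinely different from the paper's: the paper does not induct on $n$ at all, but builds $w$ in one shot by placing $d_i$ copies of each index $i$ contiguously around a regular $ms$-gon (where $\sum d_i=ms$), giving every chord between distinct indices weight $1$ and every chord of one of the $m$ inscribed regular $s$-gons an extra weight $-m$. With that explicit $w$, the crossing flow is the closed-form expression $w(I\mid J)=d(I)(ms-d(I))-m\sum_{k=1}^m x_k(s-x_k)$, and a concavity argument in the $x_k$ yields (2) for \emph{every} partition simultaneously, with equality exactly for the partitions that are ``balanced'' with respect to the chosen cyclic order; (3) and (4) then follow just by choosing the cyclic order so that the relevant partitions are, respectively, all balanced or deliberately unbalanced.

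The problem with your route is the step you yourself flag as ``the heart of the matter'': after the tree recursion and the transportation plans, you need the uniform bound
$w_{\mathrm{in}}(S)\le\tfrac12\bigl(\sum_{i\in S}d_i(m-d_i)-\modp{d(S)}{m}\modp{m-d(S)}{m}\bigr)$ for \emph{every} subset $S$, not just for the subsets cut out by the recursion tree, and you do not prove it. Your construction only pins down crossing flows for partitions compatible with the tree; for a partition transverse to the splits, $w(A\mid B)$ depends on the unconstrained entries $t_{ij}$ of the transportation matrices, and it is not established (and not obvious) that the proportional plan satisfies the required inequality --- this is precisely where the paper's global circular arrangement and concavity computation do the work. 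The same issue recurs in (4): the square move that adds $m$ to $w(i_1-j_1),w(i_2-j_2)$ and subtracts $m$ from $w(i_1-i_2),w(j_1-j_2)$ lowers $w(A\mid B)$ by $2m$ for every partition separating $\{i_1,j_1\}$ from $\{i_2,j_2\}$, and ``choosing the four vertices away from the low-slack partitions'' is not justified --- there may be no such choice without a quantitative slack estimate, which is again the missing ingredient. So the architecture is plausible and attractively functorial, but as written the proof of (2) (hence of (3) and (4), which rest on it) has a genuine gap.
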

\begin{proof}

Let $\sum_{i=1}^n d_i = ms$ and let $S=\{p_1, \dots, p_{ms}\}$ be a multiset of
indices where each index $i\in \{1,\dots, n\}$ appears $d_i$
times. 

Choose a cyclic permutation $\sigma\in \S_n$ of $[n]=\{1,\dots, n\}$ 
and arrange the elements of $S$ along a circle 
at the vertices of a regular $ms$-gon so that the $d_i$ occurrences
of each $i$ are adjacent, and the order of $\{1,\dots,n\}$ along the circle is given by $\sigma$. 

Define $\{S_k\}_{k=1}^m$ to be the regular
$s$-gons formed by the chords divisible by $m$. 
Since $d_i\leq m-1$, this subdivision satisfies the following 
property:
\begin{itemize}
\item Each $S_k$ contains at most one occurrence of each $i\in \{1,\dots, n\}$.
\end{itemize}
For every edge $e\in E(S)$, we define
\begin{align*}
w_1(e) &=\begin{cases} 1 & \text{if $e$ joins distinct indices}, \\
0 & \text{otherwise}.
\end{cases}
\\
w_2(e)&=\begin{cases} -m & \text{if $e\in E(S_k)$ for some $k=1,\dots, m$}, \\
0 & \text{otherwise}.
\end{cases}
\end{align*}

The weight function $w_1+w_2$ on $\Gamma(S)$ induces in an obvious way a weight 
function $w$ on $\Gamma([n])$. Namely, the $w$-weight of the edge $(i-j)$ in $\Gamma([n])$ is the sum of
the $(w_1+w_2)$-weights of all edges in $S$ joining the indices $i$ and $j$. (Note that by construction,
the $(w_1+w_2)$-weight of any edge in $S$ joining two equal indices is $0$).  

We now compute the $w$-flow through each vertex $i\in [n]$. Clearly, the contribution of $w_1$ to $w(i)$
is $d_i(ms-d_i)$ and the contribution of $w_2$ to $w(i)$ is $-md_i(s-1)$. Therefore,
\[
w(i)=d_i(ms-d_i)-md_i(s-1)=d_i(m-d_i).
\] This establishes (1). Next we show that
the $w$-flow across each proper partition $I\sqcup J=[n]$ is 
at least $\modp{d(I)}{m}\modp{d(J)}{m}$.

Recall that $d(I)=\sum_{i\in I} d_i$. 
Write $d(I)=mq+r$.
Let $x_1, \dots, x_m$ be the number of indices from $I$
occurring in each of the sets $S_1, \dots, S_m$.
Then $x_1+\cdots+x_m=d(I)=mq+r$. Tracing through the 
construction we see that $w_1(I\mid J)=d(I)(ms-d(I))$ and
$w_2(I\mid J)=-m \sum_{k=1}^m x_k(s-x_k)$. It follows that
\begin{equation}\label{E:quadratic}
w(I\mid J)=d(I)(ms-d(I))-m\sum_{k=1}^m x_k(s-x_k).
\end{equation}
Since $x(s-x)$ is a concave function of $x$, the minimum
in \eqref{E:quadratic} under the constraint $x_1+\cdots+x_m=mq+r$ is achieved when the $x_i$'s differ
by at most $1$ from each other, 
i.e. when $r$ of $x_i$'s are equal to $q+1$ and $m-r$ of $x_i$'s are equal
to $q$. (When this happens, we say that $I\sqcup J$ is balanced with respect to $\sigma$.) A straightforward computation now shows that
for these values of $x_i$'s Equation 
\eqref{E:quadratic} evaluates to 
\[
r(m-r)=\modp{d(I)}{m}\modp{d(J)}{m}.
\]
This finishes the proof of (2).
\begin{figure}
\scalebox{0.5}{\includegraphics{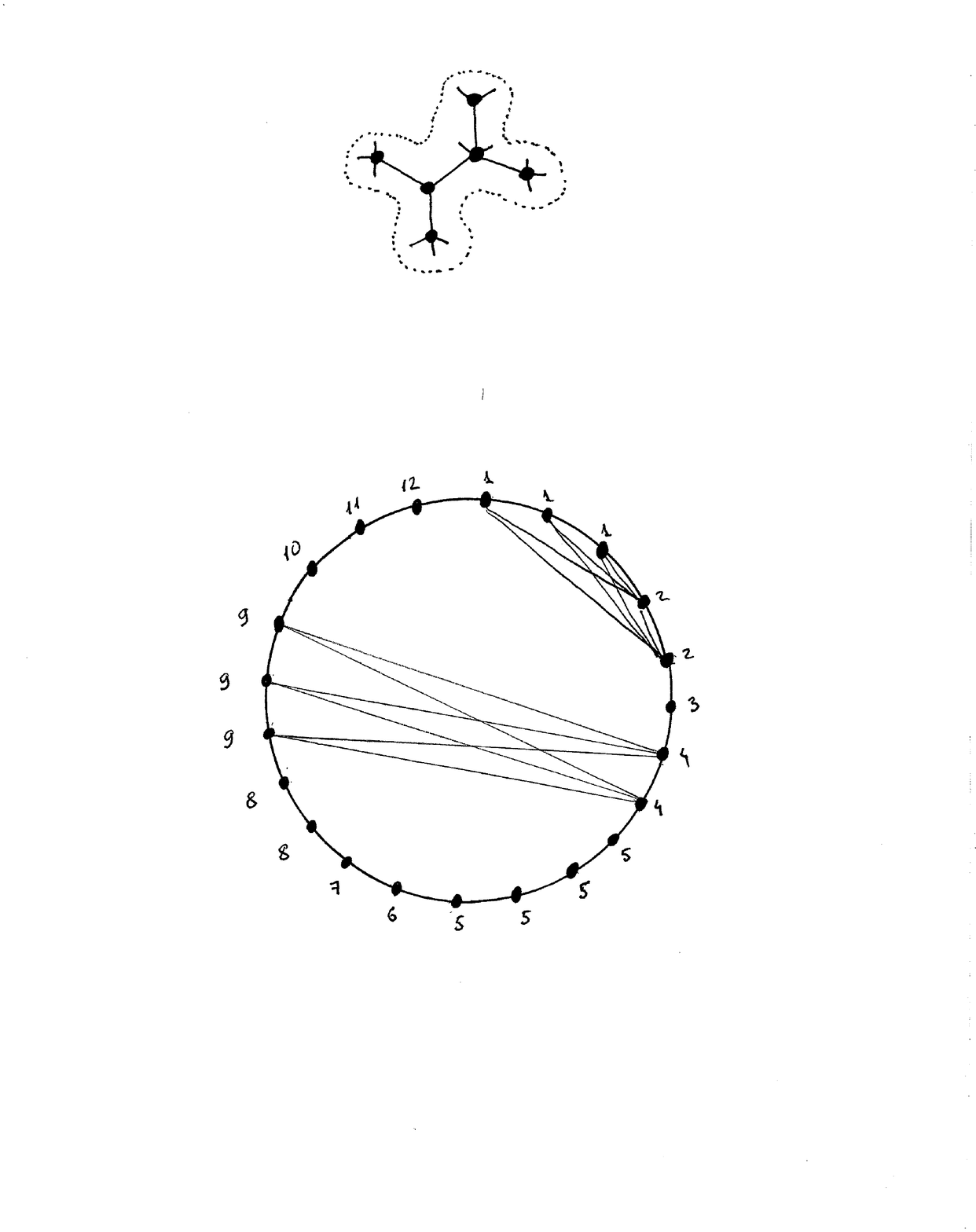}}
\caption{\footnotesize{The standard construction of a weighting in the case $m=11$ and
 $d_1=3, d_2=2, d_3=1, d_4=2, d_5=4, d_6=d_7=1, d_8=2, d_9=3, d_{10}=d_{11}=d_{12}=1$. 
 The indices $1$ and $2$ are joined by $6$ chords of $w_1$-weight $1$ and no chords of length $11$. 
Thus $w(1-2)=6$. The indices $4$ and $9$ are joined by $6$ chords of $w_1$-weight $1$ and $2$ chords of length $11$ and $w_2$-weight $-11$.
 Thus $w(4-9)=-16$.}}\label{F:weighting}
\end{figure}

Next, we note that if all indices from $I$ occur contiguously in $\sigma$, then $I\sqcup J$ is balanced with respect to $\sigma$.
Observe that for any $[C]\in \M_{0,n}$, there exists a cyclic permutation $\sigma \in \S_n$ such that 
all marked points lying on one side of any node of $C$ occur contiguously in $\sigma$. (This can be seen either by induction or 
by examining a planar representation of the dual graph of $C$ as in Figure \ref{F:dual}.) This finishes the proof of (3).
\begin{figure}
\scalebox{1.2}{\includegraphics{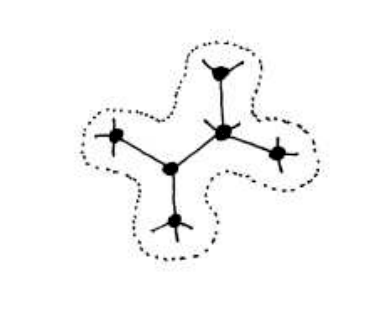}}
\caption{\footnotesize{A planar realization of a dual graph of a $13$-pointed rational curve $C$. 
The half-edges correspond to the marked points. To obtain a cyclic permutation $\sigma$ with respect to which 
all partitions $I\sqcup J$ satisfying $[C]\in \Delta_{I,J}$ are balanced, choose a loop around the graph. The order in which 
the half-edges are encountered as one goes around the loop gives a requisite permutation.}}\label{F:dual}
\end{figure}
By the above, the $w$-flow across $I \sqcup J$ is $\modp{d(I)}{m}\modp{d(J)}{m}$ if and only if 
$I\sqcup J$ is balanced with respect to $\sigma$.
Otherwise, the $w$-flow across $I \sqcup J$ is at least $2m+\modp{d(I)}{m}\modp{d(J)}{m}$
 (indeed, the values of $\sum_{k=1}^m x_k(s-x_k)$ have constant parity).  
To prove (4), it remains to observe that for any partition $I\sqcup J$, there exists $\sigma$ such that $I\sqcup J$ is not balanced 
with respect to $\sigma$. For example, if $I=\{1,\dots,k\}$ and $J=\{k+1,\dots,n\}$, then $\sigma=(k(k+1))(12\cdots n)(k(k+1))$ works. 
\end{proof}

\begin{theorem}\label{T:T1}
$D((d_1,\dots, d_n), m)$ is an effective sum of boundary divisors on 
$\M_{0,n}$ and $\vert D((d_1,\dots, d_n), m)\vert$ is a base-point-free linear system on $\M_{0,n}$. 
Moreover, if $m\nmid d_1\cdots d_n$, then
$D((d_1,\dots, d_n), m)$ separates points of $M_{0,n}$. 
\end{theorem}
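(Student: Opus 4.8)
\textbf{Proof proposal for Theorem \ref{T:T1}.}

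The plan is to deduce all three assertions directly from the Standard Construction (Proposition \ref{P:standard}) via the Effectivity Criterion (Lemma \ref{L:main}). Recall that after replacing each $d_i$ by $\modp{d_i}{m}$ we may assume $1\leq d_i\leq m-1$ (indices with $d_i\equiv 0$ contribute nothing to $D$ and can be ignored; alternatively they are isolated vertices carrying zero flow), so that $D((d_1,\dots,d_n),m)=\sum_i d_i(m-d_i)\psi_i-\sum_{I,J}\modp{d(I)}{m}\modp{d(J)}{m}\Delta_{I,J}$. For the first assertion, I would feed the weighting $w$ produced by parts (1) and (2) of Proposition \ref{P:standard} into the second paragraph of Lemma \ref{L:main}: the flow through each vertex $i$ is exactly $d_i(m-d_i)=a_i$, and the flow across each proper partition is $\geq \modp{d(I)}{m}\modp{d(J)}{m}=b_{I,J}$, so $D$ is an effective $\ZZ$-linear combination of boundary divisors. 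In particular $D$ is nef, since every boundary divisor is nef on $\M_{0,n}$ (indeed $\M_{0,n}$ is a Mori dream space and the $\Delta_{I,J}$ are the irreducible components of an snc divisor whose complement is affine — but one only needs that each $\Delta_{I,J}$ is an effective divisor pulled back appropriately, and that $D|_{\Delta_{I,J}}$ is again of the same type by the factorization/functoriality noted in the introduction).

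For base-point-freeness, fix a point $[C]\in\M_{0,n}$. Apply part (3) of Proposition \ref{P:standard}: there is a weighting $w$ with $w(i)=d_i(m-d_i)$ for all $i$ and with $w(I\mid J)=\modp{d(I)}{m}\modp{d(J)}{m}$ for every partition with $[C]\in\Delta_{I,J}$, and $w(I\mid J)\geq \modp{d(I)}{m}\modp{d(J)}{m}$ for all other partitions. By Lemma \ref{L:main} this exhibits $D\sim \sum_{I,J} c_{I,J}\Delta_{I,J}$ with all $c_{I,J}=w(I\mid J)-\modp{d(I)}{m}\modp{d(J)}{m}\geq 0$, and with $c_{I,J}=0$ whenever $[C]\in\Delta_{I,J}$. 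Hence this effective representative does not contain $[C]$ in its support, so $[C]$ is not a base point. Since $[C]$ was arbitrary, $|D|$ is base-point-free. (One should note at the outset that $D$ is effective and in particular $H^0(\M_{0,n},D)\neq 0$, which the first part already gives.)

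For the separation-of-points statement, assume $m\nmid d_1\cdots d_n$; equivalently, after reduction mod $m$, at least two indices $i_0,j_0$ have $d_{i_0},d_{j_0}\in\{1,\dots,m-1\}$, or more precisely the partition condition forces at least two nonzero residues, so that the $\psi$-coefficients are not all zero. Take two distinct points $[C_1],[C_2]\in M_{0,n}$. The idea is to produce an effective boundary representative of $D$ whose support separates them, i.e. contains one but not the other — this is not literally possible on $M_{0,n}$ (which is disjoint from the boundary), so instead one argues on $\M_{0,n}$: pick a boundary divisor $\Delta_{I,J}$ with $\modp{d(I)}{m}\modp{d(J)}{m}\neq 0$ and use part (4) of Proposition \ref{P:standard} to get a weighting with extra flow $\geq 2m$ across $I\sqcup J$; combined with part (3)-type control elsewhere, express $D$ as an effective boundary sum in two different ways and compare. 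The cleanest route, and the one I would pursue, is to invoke the factorization property: restrict $D$ to each boundary stratum, where it becomes a divisor of the same type on a product of smaller $\M_{0,n'}$'s, and induct on $n$; the base case and the interior of $M_{0,n}$ are handled by exhibiting, for any two points, a section of $D$ vanishing at one and not the other, built from the weightings above by varying the cyclic order $\sigma$.

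\textbf{Main obstacle.} Parts (1)–(4) of Proposition \ref{P:standard} do essentially all the combinatorial work, so the first two assertions are immediate. The genuine difficulty is the last clause: base-point-freeness only gives a morphism, and upgrading ``separates the boundary strata / each point is off some effective representative'' to ``separates arbitrary pairs of points of $M_{0,n}$'' requires showing the linear system $|D|$ is large enough to distinguish two configurations of $n$ points on $\PP^1$ with the same combinatorial type. I expect this to hinge on carefully choosing, for a given pair $[C_1]\neq[C_2]$, a partition $I\sqcup J$ and a cyclic order $\sigma$ so that the resulting effective representative contains $[C_1]$-direction strata but not $[C_2]$; equivalently, on the claim that as $\sigma$ ranges over all cyclic orders, the associated divisors $D_\sigma=\sum(w_\sigma(I\mid J)-\modp{d(I)}{m}\modp{d(J)}{m})\Delta_{I,J}$ have no common point other than those forced by $m\mid d_1\cdots d_n$ — this is where the hypothesis $m\nmid d_1\cdots d_n$ must be used, and pinning down exactly how it enters (presumably: it guarantees the existence of an unbalanced partition, hence strictly positive extra flow, for configurations that would otherwise be indistinguishable) is the crux of the argument.
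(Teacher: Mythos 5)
Your treatment of the first two assertions is correct and matches the paper: effectivity comes from feeding Proposition \ref{P:standard}(1)--(2) into Lemma \ref{L:main}, and base-point-freeness from part (3), which for each $[C]$ produces an effective representative of $|D|$ avoiding $[C]$. (Two small caveats there. First, your parenthetical ``every boundary divisor is nef on $\M_{0,n}$'' is false for $n\geq 5$ --- the $\Delta_{I,J}$ have negative intersection with F-curves they contain --- but nefness is not needed as a separate step since it follows from base-point-freeness. Second, your shortcut for indices with $m\mid d_i$ works but is cleaner via the paper's observation that $D$ is then a pullback along the forgetful map $\M_{0,n}\to\M_{0,n-1}$.)

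The third assertion is where you have a genuine gap: you explicitly leave the separation of points of $M_{0,n}$ as an unresolved ``crux,'' and the approaches you sketch (inducting on $n$ via factorization, or distinguishing two interior configurations by varying $\sigma$ and comparing supports of effective representatives) are not what is needed and would be hard to push through --- no pointwise analysis of pairs $[C_1]\neq[C_2]$ is required at all. The paper's argument is a three-line curve computation. Since $|D|$ is base-point-free, it defines a morphism, and it fails to separate two interior points only if they lie in a common fiber; so it suffices to show $D\cdot T>0$ for every complete irreducible curve $T\subset\M_{0,n}$ meeting $M_{0,n}$. Because $M_{0,n}$ is affine, such a $T$ must meet some boundary divisor $\Delta_{I,J}$, and $T$ is not contained in any boundary divisor. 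Now Proposition \ref{P:standard}(4) --- which you quote but do not deploy --- rewrites $D$ as an effective combination of boundary divisors in which this particular $\Delta_{I,J}$ appears with coefficient at least $2m>0$; intersecting with $T$ gives $D\cdot T\geq 2m\,(\Delta_{I,J}\cdot T)>0$. The hypothesis $m\nmid d_1\cdots d_n$ enters only to guarantee $m\nmid d_i$ for every $i$, so that no reduction to a forgetful-map pullback (which would contract fibers) occurs and Proposition \ref{P:standard} applies with all $1\leq d_i\leq m-1$; it is not used to produce ``unbalanced partitions for indistinguishable configurations'' as you speculate.
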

\begin{proof}
If $m\mid d_i$, then $D((d_1,\dots,d_n),m)=f^*(D(d_1,\dots,\widehat{d_i},\dots,d_n),m))$, where $f\co \M_{0,n} \ra \M_{0,n-1}$ is the morphism forgetting
the $i^{th}$ marked point. We immediately reduce to the case when $m\nmid d_i$. In this case,
the first claim follows immediately from Lemma \ref{L:main} by applying Proposition \ref{P:standard}(1) and (2). The second claim 
follows from Proposition \ref{P:standard}(3), which says that for any $[C]\in \M_{0,n}$, we can find an effective
combination of the boundary which is linearly equivalent to $D((d_1,\dots,d_n),m)$ and whose support does not contain $[C]$.

Finally, to prove that $D((d_1,\dots, d_n), m)$ separates points of $M_{0,n}$ when $d_i$'s are not divisible by $m$,
it suffices to show that $D((d_1,\dots, d_n), m)$ has a positive degree on any complete irreducible curve $T\subset \M_{0,n}$ meeting
the interior $M_{0,n}$. Let $T$ be such a curve. Since $M_{0,n}$ is affine, there exists a boundary divisor $\Delta_{I,J}$
which meets $T$. By Proposition \ref{P:standard}(4), we can rewrite $D((d_1,\dots, d_n), m)$ as an effective linear
combination of boundary in such a way that the coefficient of $\Delta_{I,J}$ is positive. The claim follows.
\end{proof}

It would be interesting to know if all conformal block divisors on $\M_{0,n}$ are effective combination of boundary.
In view of Theorem \ref{T:T1}, one possible strategy for proving this is to apply the technique of this paper 
to an explicit formula for the divisor classes of the conformal block divisors given by \cite[Proposition 4.3]{mukh}.
(Note that Mukhopadhyay's formula is a direct consequence of \cite{fakh} but presents the divisor class in a form most amenable
to applying Lemma \ref{L:main}.)

\section{Divisor family $\mathcal{D}_2$}\label{S:D2}
In this section, we define and prove nefness for a new family of F-nef divisors on $\M_{0,n}$.

\begin{definition}\label{D:E}
Suppose that $m\geq 3$ is an integer and $m\mid \sum_{i=1}^{n} d_i$. We define
\begin{equation*}
\begin{aligned}
E((d_1,\dots, d_n),m)& :=D((d_1,\dots, d_n),m)+m(\sum_{i:m\mid d_i} \psi_i-\sum_{I\ : \ m\mid d(I)} \Delta_{I,J})\\
=\sum_{i=1}^n \modp{d_i}{m}\modp{m-d_i}{m}\psi_i
&-\sum_{I,J} \modp{d(I)}{m}\modp{d(J)}{m}\Delta_{I,J}
+m(\sum_{i:m\mid d_i} \psi_i-\sum_{I,J\ :  m \mid d(I)} \Delta_{I,J}).
\end{aligned}
\end{equation*}
\end{definition}

The motivation for considering these divisors comes from the following observation. Suppose that $1\leq d_i\leq m-1$.
Then $D((d_1,\dots,d_n),m)$ is a base-point-free divisor on $\M_{0,n}$. It is easy to see that 
the associated morphism $f\co \M_{0,n} \ra X$ contracts the boundary divisor $\Delta_{I,J}$ whenever $m \mid d(I)$.\footnote{
A test family computation in \cite[Corollary 2.6(2)]{fedorchuk-cyclic} 
establishing this for the case $d_1=\cdots=d_n$ applies verbatim in the more general case.} It follows that 
$E((d_1,\dots,d_n),m)$ is of the form $f^*A-E$, where $A$ is a very ample divisor on $X$ and $E$ is an effective combination 
of $f$-exceptional divisors.

\begin{theorem}\label{T:T2}
Suppose $m\geq 3$ and $\{d_i\}_{i=1}^n$ are such that 
$m \mid \sum_{i=1}^n d_i$. Then:
\begin{enumerate}
\item[(a)] $E((d_1,\dots, d_n),m)$ is an effective combination of boundary divisors 
on $\M_{0,n}$.
\item[(b)] $E((d_1,\dots, d_n),m)$ is nef on $\M_{0,n}$.
\end{enumerate}
\end{theorem}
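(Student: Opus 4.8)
The plan is to mimic the proof of Theorem \ref{T:T1}: reduce to the case where no $d_i$ is divisible by $m$, then invoke Lemma \ref{L:main} by producing a weighting of $\Gamma([n])$ with the right vertex-flows and with flow across each proper partition $I\sqcup J$ at least as large as the coefficient of $\Delta_{I,J}$ appearing in $E((d_1,\dots,d_n),m)$. As in the proof of Theorem \ref{T:T1}, if $m\mid d_i$ for some $i$, the forgetful morphism $f\co \M_{0,n}\ra \M_{0,n-1}$ pulls back $E((d_1,\dots,\widehat{d_i},\dots,d_n),m)$ to $E((d_1,\dots,d_n),m)$ (one has to check the extra terms $m(\sum_{m\mid d_i}\psi_i - \sum_{m\mid d(I)}\Delta_{I,J})$ behave correctly under pullback, which is a routine bookkeeping check since $\psi_i$ pulls back to $\psi_i$ plus a boundary correction and the partitions with $m\mid d(I)$ correspond after forgetting $i$), so it suffices to handle $1\le d_i\le m-1$ for all $i$. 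Then part (b) follows from (a) together with the factorization/functoriality argument sketched in the introduction (restriction of $E$ to each boundary divisor $\Delta_{I,J}\cong \M_{0,|I|+1}\times\M_{0,|J|+1}$ is again of the form $E(\cdot,m)$, or a sum of such, so an induction on $n$ combined with effectivity gives nefness by the standard argument of GKM). So the real content is part (a) in the case $1\le d_i\le m-1$.

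For part (a), fix $1\le d_i\le m-1$ for all $i$. The coefficient of $\Delta_{I,J}$ in $-E$ is $\modp{d(I)}{m}\modp{d(J)}{m}$ when $m\nmid d(I)$, and $0 + m\cdot 1 = m$ when $m\mid d(I)$ (note $\modp{d(I)}{m}\modp{d(J)}{m}=0$ in that case, so the boundary coefficient is exactly $m$); the coefficient of $\psi_i$ in $E$ is $\modp{d_i}{m}\modp{m-d_i}{m}=d_i(m-d_i)$ since $1\le d_i\le m-1$ (there is no $\psi_i$ with $m\mid d_i$). So by Lemma \ref{L:main} I need a weighting $w$ of $\Gamma([n])$ with $w(i)=d_i(m-d_i)$ for every vertex and
\[
w(I\mid J)\ \ge\ \begin{cases}\modp{d(I)}{m}\modp{d(J)}{m}, & m\nmid d(I),\\[2pt] m, & m\mid d(I),\end{cases}
\]
for every proper partition. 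The Standard Construction of Proposition \ref{P:standard} already gives the correct vertex-flows and the bound $w(I\mid J)\ge \modp{d(I)}{m}\modp{d(J)}{m}$ for all $I\sqcup J$; the only gap is that when $m\mid d(I)$ the guaranteed bound is $w(I\mid J)\ge 0$, whereas I now need $w(I\mid J)\ge m$. Revisiting the computation \eqref{E:quadratic}: with $d(I)=mq$ (so $r=0$), the flow is $w(I\mid J)=mq(ms-mq)-m\sum_k x_k(s-x_k)$ where $\sum_k x_k=mq$, and this is $\ge 0$ with equality exactly when all $x_k=q$, i.e. when $I\sqcup J$ is balanced with respect to $\sigma$; otherwise (since $\sum_k x_k(s-x_k)$ has constant parity as the $x_k$ vary with fixed sum), the flow jumps by at least $2m$, hence $w(I\mid J)\ge 2m\ge m$. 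So the remaining task is to choose a single cyclic permutation $\sigma$ such that \emph{every} $m$-partition $I\sqcup J$ (with $m\mid d(I)$) is simultaneously \emph{unbalanced} with respect to $\sigma$.

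That simultaneous-unbalancing step is where I expect the main difficulty to lie, since Proposition \ref{P:standard}(4) only arranges unbalancedness for a single prescribed partition at a time. Two approaches seem plausible. First, a direct/probabilistic-combinatorial one: count, for a fixed $m$-partition $I\sqcup J$, the fraction of cyclic permutations $\sigma$ for which it is balanced, show this fraction is small, and union-bound over all $m$-partitions — but the number of partitions is exponential in $n$, so a crude bound will not suffice and one needs the balanced-fraction to be exponentially small, which may require a more careful argument (e.g. balancedness forces rigid divisibility constraints on how the $d_i$-blocks are positioned modulo $m$ around the circle). A cleaner second approach is to perturb: take the weighting $w$ from the Standard Construction (for an arbitrary $\sigma$), which already satisfies all the required inequalities except possibly $w(I\mid J)\ge m$ on the finitely-generated cone of ``bad'' $m$-partitions that happen to be balanced, and then add a small correction weighting $w'$ supported on a few edges that raises the flow across exactly those bad partitions without disturbing the vertex-flows — concretely, for a balanced $m$-partition $I_0\sqcup J_0$ one can add weight along a short path of $w_2$-type chords, as in the proof of (4), to push $w(I_0\mid J_0)$ up by $2m$; the point would be to check that finitely many such local corrections can be combined (they affect other partitions' flows by bounded amounts, and the slack $2m$ absorbs this) — but reconciling overlapping corrections cleanly is itself delicate. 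I would pursue the simultaneous-$\sigma$ route first, trying to show that the ``zig-zag'' permutation $\sigma=(k(k+1))(12\cdots n)(k(k+1))$-type constructions, or a generic $\sigma$, unbalance all $m$-partitions at once, since that keeps the proof parallel to Proposition \ref{P:standard} and avoids a messy correction argument.
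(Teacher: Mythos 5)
Your reduction to the case $1\le d_i\le m-1$, your identification of the exact inequalities needed (flow $\ge \modp{d(I)}{m}\modp{d(J)}{m}$ across ordinary partitions and flow $\ge m$ across $m$-partitions, with vertex flows $d_i(m-d_i)$), and your observation that the Standard Construction gives flow $\ge 2m$ across any $m$-partition that is \emph{unbalanced} with respect to $\sigma$ all match the paper. But the heart of part (a) — actually producing a single weighting satisfying all of these constraints simultaneously — is left open in your proposal, and your preferred route for closing it does not work. There is in general \emph{no} cyclic permutation $\sigma$ that unbalances every $m$-partition at once: take $n=9$, $d_1=\cdots=d_9=1$, $m=3$. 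Then $s=3$, the sets $S_1,S_2,S_3$ are the three position classes mod $3$ on the $9$-gon, and an $m$-partition is any $I$ with $|I|=3$ or $6$. Any three cyclically consecutive marked points hit each $S_k$ exactly once, so they form a balanced $m$-partition for \emph{every} choice of $\sigma$. Hence the union-bound/generic-$\sigma$ strategy is dead on arrival, and your fallback perturbation argument is, as you yourself note, not carried out.

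The paper instead proves the needed existence statement (its Proposition \ref{P:induction}) by induction on $\sum_i d_i$ together with an averaging device: for each $m$-partition $S_1\sqcup S_2$ one glues inductively constructed weightings of $\Gamma(S_1)$ and $\Gamma(S_2)$ (zero weight on crossing edges) to get a weighting $w_{S_1\mid S_2}$ that has the correct vertex flows, satisfies the ordinary-partition bound, gives flow $\ge m$ across every $m$-partition \emph{other than} $S_1\sqcup S_2$ (and $\ge 2m-2$ across transverse ones), but flow $0$ across $S_1\sqcup S_2$ itself. Averaging these weightings over all $m$-partitions (with separate case analysis when there are one, two, or few $m$-partitions, and when $[n]$ splits into three or four $m$-divisible pieces) recovers the missing flow across each $S_1\sqcup S_2$ from the other summands. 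This averaging-over-decompositions idea is the essential new ingredient of Theorem \ref{T:T2}, and it is absent from your proposal; without it, or a worked-out substitute, part (a) is not proved. (Your treatment of the $m\mid d_i$ reduction and of part (b) via factorization is fine and agrees with the paper, modulo noting that the paper's identity is $E((d_1,\dots,d_n),m)=f^*\bigl(E((d_1,\dots,\widehat{d_i},\dots,d_n),m)\bigr)+m\psi_i$, so one also needs that $\psi_i$ is an effective combination of boundary.)
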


\begin{example}
By taking $n=9$, $d_1=\cdots=d_9=1$, and $m=3$, we obtain the divisor $\Delta_2+\Delta_3+2\Delta_4$ in $\Nef(\M_{0,9})$.
This divisor generates an extremal ray of the nef cone of $\M_{0,9}$ and is not known to come from the conformal block bundles;
see \cite{swinarski}.

It is proved in \cite{fedorchuk-cyclic}, that in the case $d_1=\cdots=d_n$ and $m=3$, the divisor 
$E((d_1,\dots, d_n),m)$ generates an extremal ray of the symmetric nef cone of $\M_{0,n}$. We expect this to be true
more generally whenever $m\geq 5$ is prime and $d_1=\cdots=d_n$. 
\end{example}
 
\begin{proof}[Proof of Theorem \ref{T:T2}]
Replacing $d_i$ by $\modp{d_i}{m}$, we can assume that $0\leq d_i\leq m-1$. Next, we observe that if $d_i=0$, then
\[
E\bigl((d_1,\dots,d_n),m\bigr)=f^*\left(E\bigl((d_1,\dots,\widehat{d_i},\dots,d_n),m\bigr)\right)
+m\psi_i,\] where $f\co \M_{0,n} \ra \M_{0,n-1}$ is the morphism forgetting
the $i^{th}$ marked point. Since $\psi_i$ is well-known to be an effective combination of boundary (see \cite[Lemma 1]{FG}), 
we reduce to the case $1\leq d_i \leq m-1$. Here, Part (a) follows immediately from Lemma \ref{L:main} 
once we establish the existence of a certain weighting on $\Gamma([n])$.
This is achieved in Proposition \ref{P:induction} below.

We proceed to prove Part (b). Since $E((d_1,\dots, d_n),m)$ is an effective combination of the boundary divisors on $\M_{0,n}$ 
by Part (a), it has non-negative degree on any irreducible curve intersecting the interior
$M_{0,n}$. 

Next, observe that $E((d_1,\dots, d_n),m)$ satisfies factorization, that is 
for any boundary divisor $\Delta_{I,J} \subset \M_{0,n}$, we have 
\[
E((d_1,\dots, d_n),m)\vert_{\Delta_{I,J}}=E\bigl((\{d_i\}_{i\in I}, \sum_{j\in J} d_j),m)\bigr)\boxtimes 
E\bigl((\{d_j\}_{j\in J}, \sum_{i\in I} d_i),m)\bigr),
\]
where we use the usual identification $\Delta_{I,J}\simeq \M_{0, I\cup p}\times \M_{0, J\cup q}$.
It follows by a standard argument that $E((d_1,\dots, d_n),m)$ is nef on $\M_{0,n}$.
\end{proof}

In the remainder of this section, we finish the proof of Part (a) of Theorem \ref{T:T2}.
\begin{definition} Suppose $1\leq d_i\leq m-1$ and $m \mid \sum_{i=1}^n d_i$.
Let $w$ be a weighting of a complete graph $\Gamma([n])$. We say that 
\begin{enumerate}
\item[(P1)] $w$ satisfies (P1) with respect to a vertex $i\in \Gamma([n])$ 
if the $w$-flow through $i$ is exactly $d_i(m-d_i)$. 
\item[(P2)] $w$ satisfies (P2) with respect to a proper partition $I\sqcup J=[n]$
if the $w$-flow across $I\sqcup J$ is at least $\modp{d(I)}{m}\modp{d(J)}{m}$. 
\item[(P3)] $w$ satisfies (P3) with respect to a proper $m$-partition 
$I\sqcup J=S$ if the $w$-flow across $I\sqcup J$ is at least $m$.
\end{enumerate}
We say that $w$ satisfies (P1), (P2), and (P3), if it satisfies (P1), (P2), and (P3)
with respect to all vertices, all proper partitions, and all proper $m$-partitions, respectively.
\end{definition}

\begin{prop}\label{P:induction} Let $m\geq 3$.
Suppose $1\leq d_i\leq m-1$ and $m\mid \sum_{i=1}^n d_i$. Then there exists
a weighting $w$ of $\Gamma([n])$ satisfying (P1)-(P3). 
\end{prop}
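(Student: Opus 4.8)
The goal is to upgrade the Standard Construction (Proposition \ref{P:standard}) so that the resulting weighting simultaneously satisfies (P3): every proper $m$-partition carries flow at least $m$. The starting point is the observation, made in the proof of Proposition \ref{P:standard}(4), that for a fixed cyclic permutation $\sigma$ the flow across $I\sqcup J$ exceeds $\modp{d(I)}{m}\modp{d(J)}{m}$ by a nonnegative multiple of $2m$ precisely when $I\sqcup J$ is \emph{not} balanced with respect to $\sigma$. For an $m$-partition we have $m\mid d(I)$, hence $\modp{d(I)}{m}\modp{d(J)}{m}=0$, so (P2) gives flow $\geq 0$ and (P3) asks for flow $\geq m$; thus (P3) holds for a given $m$-partition $I\sqcup J$ as soon as $I\sqcup J$ is unbalanced with respect to the chosen $\sigma$ (in which case the flow is actually $\geq 2m$). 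So the entire difficulty is a \emph{combinatorial} one: the single weighting coming from one $\sigma$ only controls the $m$-partitions unbalanced for that $\sigma$, whereas we must handle all of them at once.

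\textbf{The averaging idea.} The natural remedy is to average the Standard Construction over several cyclic permutations. If $w_{\sigma}$ denotes the weighting produced by $\sigma$, then for any probability vector $(\lambda_{\sigma})$ the convex combination $w=\sum_{\sigma}\lambda_{\sigma}w_{\sigma}$ still satisfies (P1) (flow through each vertex is $d_i(m-d_i)$, a linear condition preserved by averaging) and still satisfies (P2) (each $w_{\sigma}$ does, and the bound $\modp{d(I)}{m}\modp{d(J)}{m}$ is independent of $\sigma$). Moreover the flow of $w$ across an $m$-partition $I\sqcup J$ equals $\sum_{\sigma}\lambda_{\sigma}(\text{flow of }w_{\sigma})$, which is $\geq 2m\cdot\sum_{\sigma:\,I\sqcup J\text{ unbalanced for }\sigma}\lambda_{\sigma}$. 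Hence it suffices to exhibit a finite family of cyclic permutations $\sigma_1,\dots,\sigma_N$ such that \emph{every} proper $m$-partition is unbalanced for at least one $\sigma_t$, and then to choose the weights $\lambda_t$ large enough on each $\sigma_t$: concretely, if for a fixed $m$-partition the total $\lambda$-mass of the permutations unbalancing it is at least $1/2$, the flow is $\geq m$. Taking $\lambda_t=1/N$ and arguing that each $m$-partition is unbalanced for \emph{most} of the $\sigma_t$ (or simply taking $N$ copies weighted so the good set always has mass $\geq 1/2$) then finishes the argument; one can even allow rational weights since the Effectivity Criterion (Lemma \ref{L:main}) works over $\QQ$, and clearing denominators at the end is harmless.

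\textbf{Constructing the permutations.} It remains to produce the family $\{\sigma_t\}$. A clean way: for each unordered pair $\{a,b\}\subset[n]$ of distinct indices, include a cyclic permutation $\sigma_{a,b}$ in which $a$ and $b$ occupy \emph{adjacent} positions on the circle. I claim that an $m$-partition $I\sqcup J$ cannot be balanced for $\sigma_{a,b}$ whenever $a,b$ lie on opposite sides, i.e. $a\in I$, $b\in J$ — or more robustly, one shows that if $I\sqcup J$ is balanced for every $\sigma_{a,b}$ then $I$ or $J$ is empty, contradicting properness. The point is that balancedness is a strong rigidity condition: moving a single small block of the circular arrangement past a chord of length $m$ changes the $x_k$'s and, since $\sum x_k(s-x_k)$ has fixed parity but balancedness pins it to the minimum, two "generic" cyclic orders cannot both be balanced for a nontrivial partition. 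I expect the \textbf{main obstacle} to be exactly this combinatorial lemma — proving that a modest, explicitly describable family of cyclic orders suffices to unbalance every proper $m$-partition, and bookkeeping the weights $\lambda_t$ so that the flow bound $\geq m$ is reached uniformly. Everything else (properties (P1) and (P2), and the passage from "unbalanced" to "flow $\geq 2m$") is inherited verbatim from Proposition \ref{P:standard} and its proof.
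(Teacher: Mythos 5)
Your reduction of (P3) to a statement about balancedness is correct as far as it goes: for an $m$-partition the target $\modp{d(I)}{m}\modp{d(J)}{m}$ is $0$, and the proof of Proposition \ref{P:standard}(4) does show that the flow is exactly $0$ when $I\sqcup J$ is balanced for $\sigma$ and at least $2m$ otherwise. But the argument has a genuine gap at exactly the point you flag as the main obstacle, and the specific lemma you propose to fill it is false. If $a\in I$ and $b\in J$ are placed adjacently on the circle this does \emph{not} force $I\sqcup J$ to be unbalanced: whenever the indices of $I$ occur contiguously in $\sigma$ the partition is balanced, and yet the two boundary positions of the block give adjacent pairs $a\in I$, $b\in J$. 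The weaker fallback (``some $\sigma_{a,b}$ unbalances each proper $m$-partition'') is true but useless for your scheme, since one unbalancing permutation out of $N$ only yields average flow $2m/N$. What you actually need is a single probability distribution $(\lambda_\sigma)$ under which \emph{every} proper $m$-partition is unbalanced with mass at least $1/2$, and this global feasibility is the whole difficulty; it is not addressed. Uniform weights demonstrably fail: for $n=4$, $d=(1,2,1,2)$, $m=3$, the $m$-partition $\{1,2\}\sqcup\{3,4\}$ is balanced for every cyclic order in which $1$ and $2$ are adjacent, i.e.\ for $4$ of the $6$ cyclic orders, so the uniformly averaged flow is $\tfrac{1}{3}\cdot 2m=2<m$. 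In this tiny example one can hand-pick two permutations with weight $1/2$ each, but when there are three or more $m$-partitions whose unbalancing sets overlap poorly, the ``mass $\geq 1/2$ for each'' constraints may be mutually unsatisfiable, and nothing in the proposal rules this out.

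The paper avoids this by changing the family being averaged. It argues by induction on $\sum d_i$: for each $m$-partition $S_1\sqcup S_2$ one glues inductively constructed weightings of $\Gamma(S_1)$ and $\Gamma(S_2)$ into a weighting $w_{S_1\mid S_2}$ of $\Gamma([n])$ (Construction \ref{C:inductive}). The point of Claim \ref{inductive-claim} is that each such weighting satisfies (P1)--(P2) and has flow at least $2m-2$ (resp.\ at least $m$) across every $m$-partition transverse (resp.\ non-transverse) to $S_1\sqcup S_2$, dropping to $0$ only on $S_1\sqcup S_2$ itself. Because each member of the family fails on at most one $m$-partition and does uniformly well on all others, the plain average over all choices works (with separate low-complexity cases when there are few $m$-partitions, handled via Proposition \ref{P:standard}(4)). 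Your family of Standard-Construction weightings lacks this property --- a single $\sigma$ can simultaneously balance many $m$-partitions --- so to salvage your approach you would need to prove the missing equidistribution lemma, which is a substantial and possibly false combinatorial statement.
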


\begin{proof}
In what follows we say that two partitions $A\sqcup B$ and $C \sqcup D$ are \emph{transverse} if
$\card(A\cap C)\card(A\cap D)\card(B\cap C)\card(B\cap D) >0$. 

Note that the Standard Construction of Proposition \ref{P:standard} produces a weighting satisfying (P1) and (P2).
The most delicate part of the proof is ensuring that (P3) holds. 
We will construct the requisite weighting $w$ by breaking $S$ into smaller pieces using $m$-partitions of $[n]$
and averaging. 

\begin{construction}\label{C:inductive}
Suppose $[n]=S_1\sqcup S_2$ is a proper $m$-partition. By the inductive hypothesis,
there exist weightings $w_1$ and $w_2$ of $\Gamma(S_1)$ and $\Gamma(S_2)$, respectively, 
satisfying (P1)-(P3). These define a weighting $w_{S_1\mid S_2}$ of $\Gamma([n])$ in an obvious way:
\begin{align*}
w_{S_1\mid S_2}(e)=\begin{cases} w_1(e) & \text{if $e\in E(S_1)$}, \\ w_2(e) & \text{if $e\in E(S_2)$}, \\ 0 \ \text{ otherwise} \end{cases} 
\end{align*}
\begin{claim}\label{inductive-claim}
\hfill
\vspace{-0.5pc}
\begin{enumerate}
\item[]
\item $w_{S_1\mid S_2}$ satisfies (P1).
\item $w_{S_1\mid S_2}$ satisfies (P2).
\item The $w_{S_1\mid S_2}$-flow across every $m$-partitions of $[n]$, with the exception 
of $S_1 \sqcup S_2$, is at least $m$. The $w_{S_1\mid S_2}$-flow across $S_1\sqcup S_2$ is $0$. 
The $w_{S_1\mid S_2}$-flow across every $m$-partition $I\sqcup J$ transverse to $S_1\sqcup S_2$ is at least $2m-2$, 
and is at least $2m$ if in addition $m\mid d(S_1\cap I)$. 
\end{enumerate}
\end{claim}
\begin{proof}
The first claim is clear. To prove the second claim, consider a partition $I\sqcup J=[n]$.
Let $r_1=\modp{d(I\cap S_1)}{m}$ and $r_2=\modp{d(I\cap S_2)}{m}$. Without loss of generality, we can 
assume that $r_1+r_2\leq m$. Then the $w_{S_1\mid S_2}$-flow between $I$ and $J$
is at least $r_1(m-r_1)+r_2(m-r_2)\geq (r_1+r_2)(m-r_1-r_2)$, as desired. 

We proceed to prove the third claim. 
Clearly, $w_{S_1\mid S_2}$-flow across $S_1\sqcup S_2$ is $0$.
Let $I\sqcup J$ be another $m$-partition of $[n]$. Suppose first that $I\sqcup J$ is transverse to $S_1\sqcup S_2$.
The $w_{S_1\mid S_2}$-flow across $I\cup J$ is the sum of the $w_1$-flow across $(I\cap S_1) \sqcup (J\cap S_1)$ in $S_1$
and the $w_2$-flow across $(I\cap S_2) \sqcup (J\cap S_2)$ in $S_2$.
Let $r_1=\modp{d(I\cap S_1)}{p}$. Applying the inductive hypothesis we see that
if $r_1=0$, then the resulting $w_{S_1\mid S_2}$-flow is at least $2m$. If $1\leq r_1\leq m-1$,
then the $w_{S_1\mid S_2}$-flow across $I\cup J$ is at least $2r_1(m-r_1) \geq 2m-2$. 

Finally, suppose that $I\sqcup J$ is not transverse to $S_1\sqcup S_2$. Without loss of generality, 
we can assume that $S_1\subsetneq I$. Then the $w_{S_1\mid S_2}$-flow across $I\sqcup J$ 
equals to the $w_2$-flow across $(I\cap S_2)\sqcup (J\cap S_2)$,
which is at least $m$ by the inductive hypothesis 
because $(I\cap S_2)\sqcup (J\cap S_2)$ is a proper $m$-partition of $S_2$. 
\end{proof}
\end{construction}
Using Construction \ref{C:inductive}, we proceed to construct the weighting $w$ using averaging and induction on $\sum_{i=1}^n d_i$.
The case of $\sum_{i=1}^n d_i=m$ follows from Proposition \ref{P:standard} because there are no proper $m$-partitions.
Suppose $\sum_{i=1}^n d_i=sm$, where $s\geq 2$. 

\emph{Case 1: There is at most one proper $m$-partition $I\mid J$ of $[n]$.} 
In this case, the claim follows by Proposition \ref{P:standard}(4) because we can arrange 
the flow across the unique $m$-partition to be at least $2m$.

\emph{Case 2: There are exactly two distinct $m$-partitions of $[n]$.} Call them $A\sqcup B$ and $C\sqcup D$. By the assumption
these must be transverse (otherwise, there would exist at least $3$ distinct $m$-partitions). 
By Proposition \ref{P:standard}(4), there exists a weighting $w_1$ of $\Gamma([n])$ 
satisfying (P1)-(P2) and $w_1(A\mid B)\geq 2m$. Similarly, there exists a weighting $w_2$ of $\Gamma([n])$
satisfying (P1)-(P2) and $w_2(C\mid D)\geq 2m$. It follows that $w:=(w_1+w_2)/2$ is a weighting of $\Gamma([n])$ satisfying (P1)-(P3).

{\em Case 3: $[n]$ is not a disjoint union of three non-trivial $m$-divisible subsets and there are $k\geq 3$ 
distinct $m$-partitions of $[n]$.} Let $\{ A_i \sqcup B_i\}_{i=1}^k$ be all $m$-partitions of $[n]$. Let $w_i:=w_{A_i\mid B_i}$ 
as constructed in Construction \ref{C:inductive}. Then $w:=(\sum_{i=1}^k w_i)/k$ is a weighting of $\Gamma([n])$ satisfying (P1)-(P3).
Indeed, (P1)-(P2) clearly hold. Furthermore, 
for any $m$-partition $A_i\sqcup B_i$ and $j\neq i$, we have $w_j(A_i\sqcup B_i) \geq 2m-2$ by
Claim \ref{inductive-claim}(3). It follows that
 \[
w(I\mid J)\geq (k-1)(2m-2)/k \geq m,\] 
if $m\geq 4$ or if $m=3$ and $k\geq 4$. (If $m=3$, it is easy to see that $k\geq 4$.)

\emph{Case 3: $[n]$ is a disjoint union of four non-trivial $m$-divisible subsets.} 
Let $[n]=A\cup B\cup C\cup D$, where $m\mid d(A),d(B),d(C),d(D)$. 
Using Construction \ref{C:inductive}, we obtain 
the following three weightings of $[n]$:
\[
w_1:=w_{(A\cup B) \mid (C\cup D)}, \quad w_2:=w_{(A\cup C) \mid (B\cup D)}, \quad w_3:=w_{(A\cup D) \mid (B\cup C)}.
\]
Set $w:=(w_1+w_2+w_3)/3$. Then 
\[w((A\cup B) \mid (C\cup D)) \geq (0+2m+2m)/3 > m.\]
Hence $w$ satisfies (P3) with respect to $(A\cup B) \sqcup (C\cup D)$.
Similarly, it is easy 
to see from the construction that $w$ satisfies (P3) with respect to all proper $m$-partitions.  

{\em Case 4: $[n]$ is a disjoint union of three non-trivial $m$-divisible subset but not a disjoint union of four non-trivial $m$-divisible subsets.}
The case of $m=3$ is straightforward and so we assume $m\geq 4$ in what follows. 
Let $[n]=S_1\cup S_2 \cup S_3$, where $m\mid d(S_i)$. 

Suppose first that $S_i\sqcup S_j$ is the unique $m$-partition of $(S_i\cup S_j)$
for all $i\neq j$. Let $w_{12}:=w_{(S_1\cup S_2)\mid S_3}$ be the weighting of $\Gamma([n])$ 
from Construction \ref{C:inductive}.
Since $S_1\cup S_2$ has a unique $m$-partition, we can arrange 
the $w_{12}$-flow across $S_1\sqcup S_2$ in $S_1\cup S_2$
to be at least $2m$ by Proposition \ref{P:standard}(4). Define analogously $w_{13}$ and $w_{23}$. Then 
\[
w:=(w_{12}+w_{13}+w_{23})/3
\]
is a weighting of $\Gamma([n])$ satisfying (P1)-(P3).

Finally, without loss of generality, suppose that $S_1\cup S_2$ 
has an $m$-partition $A\sqcup B$ distinct from $S_1\sqcup S_2$.
Then $A\sqcup B$ must be transverse to $S_1\sqcup S_2$. 
The average of the following weightings of $\Gamma([n])$
constructed using Construction \ref{C:inductive}:
\[
w_{S_1\mid (S_2\cup S_3)}, \ w_{S_2\mid (S_1\cup S_3)}, \ w_{A\mid (B\cup S_3)}, \ w_{B\mid (A\cup S_3)}
\]
satisfies (P1)-(P3). 
Indeed, we must verify that the flow across $S_1 \sqcup (S_2\cup S_3)$ is at least $m$, the other cases
being analogous or easier. 
By construction, the flow across $S_1 \sqcup (S_2\cup S_3)$ is at least $(0+m+(2m-2)+(2m-2))/4\geq m$. 
\end{proof}

\subsection*{Acknowledgements.} 
The author was partially supported by NSF grant DMS-1259226. We thank 
David Swinarski for helpful discussions related to this work and for help with experimental computations 
performed using his Macaulay 2 package {\tt ConfBlocks}. 
We also thank Anand Deopurkar for hospitality and stimulating discussions.

\bibliographystyle{alpha}
\bibliography{references20}

\begin{thebibliography}{{Muk}13}

\bibitem[AC98]{AC}
Enrico Arbarello and Maurizio Cornalba.
\newblock Calculating cohomology groups of moduli spaces of curves via
  algebraic geometry.
\newblock {\em Inst. Hautes \'Etudes Sci. Publ. Math.}, (88):97--127 (1999),
  1998.

\bibitem[AGS10]{ags}
V.~{Alexeev}, A.~{Gibney}, and D.~{Swinarski}.
\newblock {Conformal blocks divisors on {$\overline{M}_{0,n}$} from {$sl_2$}},
  2010.
\newblock {\tt arXiv:1011.6659 [math.AG]}.

\bibitem[AGSS12]{agss}
Maxim Arap, Angela Gibney, James Stankewicz, and David Swinarski.
\newblock {$sl_n$} level 1 conformal blocks divisors on {$\overline M_{0,n}$}.
\newblock {\em Int. Math. Res. Not. IMRN}, (7):1634--1680, 2012.

\bibitem[BG12]{BG}
M.~{Bolognesi} and N.~{Giansiracusa}.
\newblock {Factorization of point configurations, cyclic covers and conformal
  blocks}.
\newblock {\em To appear in \emph{Journal of the European Mathematical
  Society}}, 2012.
\newblock {\tt arXiv:1208.4019 [math.AG]}.

\bibitem[BGM13]{conf-block}
Prakash Belkale, Angela Gibney, and Swarnava Mukhopadhyay.
\newblock Quantum cohomology and conformal blocks on $\bar{M}_{0,n}$, 2013.
\newblock {\tt arXiv:arXiv:1308.4906 [math.AG]}.

\bibitem[Fak12]{fakh}
Najmuddin Fakhruddin.
\newblock Chern classes of conformal blocks.
\newblock In {\em Compact moduli spaces and vector bundles}, volume 564 of {\em
  Contemp. Math.}, pages 145--176. Amer. Math. Soc., Providence, RI, 2012.

\bibitem[Fed11]{fedorchuk-cyclic}
Maksym Fedorchuk.
\newblock Cyclic covering morphisms on {$\overline{M}_{0,n}$}, 2011.
\newblock {\tt arXiv:1105.0655 [math.AG]}.

\bibitem[FG03]{FG}
Gavril Farkas and Angela Gibney.
\newblock The {M}ori cones of moduli spaces of pointed curves of small genus.
\newblock {\em Trans. Amer. Math. Soc.}, 355(3):1183--1199 (electronic), 2003.

\bibitem[GG12]{gian-gib}
Noah Giansiracusa and Angela Gibney.
\newblock The cone of type {$A$}, level 1, conformal blocks divisors.
\newblock {\em Adv. Math.}, 231(2):798--814, 2012.

\bibitem[Gia13]{gian}
Noah Giansiracusa.
\newblock Conformal blocks and rational normal curves.
\newblock {\em J. Algebraic Geom.}, 22(4):773--793, 2013.

\bibitem[GJM11]{GJM}
N.~{Giansiracusa}, D.~{Jensen}, and H.-B. {Moon}.
\newblock {GIT Compactifications of {$\overline{M}_{0,n}$} and Flips}, 2011.
\newblock {\tt arXiv:1112.0232 [math.AG]}.

\bibitem[GJMS12]{GJMS}
A.~{Gibney}, D.~{Jensen}, H.-B. {Moon}, and D.~{Swinarski}.
\newblock {Veronese quotient models of {$\overline{M}_{0,n}$} and conformal
  blocks}, 2012.
\newblock {\tt arXiv:1208.2438 [math.AG]}.

\bibitem[GKM02]{GKM}
Angela Gibney, Sean Keel, and Ian Morrison.
\newblock Towards the ample cone of {$\overline M_{g,n}$}.
\newblock {\em J. Amer. Math. Soc.}, 15(2):273--294 (electronic), 2002.

\bibitem[Kee92]{keel}
Sean Keel.
\newblock Intersection theory of moduli space of stable {$n$}-pointed curves of
  genus zero.
\newblock {\em Trans. Amer. Math. Soc.}, 330(2):545--574, 1992.

\bibitem[Kol90]{kollar-projectivity}
J{\'a}nos Koll{\'a}r.
\newblock Projectivity of complete moduli.
\newblock {\em J. Differential Geom.}, 32(1):235--268, 1990.

\bibitem[{Muk}13]{mukh}
Swarnava {Mukhopadhyay}.
\newblock {Rank-Level duality and Conformal Block divisors}, 2013.
\newblock {\tt arXiv:1308.0854 [math.AG]}.

\bibitem[Pix13]{pixton}
Aaron Pixton.
\newblock A nonboundary nef divisor on {$\overline{M}_{0,12}$}.
\newblock {\em Geom. Topol.}, 17(3):1317--1324, 2013.

\bibitem[{Swi}11]{swinarski}
David {Swinarski}.
\newblock {{$sl_2$} conformal block divisors and the nef cone of
  {$\overline{M}_{0,n}$}}, 2011.
\newblock {\tt arXiv:1107.5331 [math.AG]}.

\end{thebibliography}
\end{document}